\theoremstyle{plain}
\newtheorem{theorem}{Theorem}
\newtheorem*{proposition*}{Proposition}
\newtheorem{corollary}[theorem]{Corollary}
\newtheorem*{corollary*}{Corollary}
\newtheorem{lemma}[theorem]{Lemma}
\newtheorem*{theorem*}{Theorem}
\newtheorem*{lemma*}{Lemma}
\newtheorem*{conjecture*}{Conjecture}
\newtheorem*{question*}{Question}
\newtheorem{question}[theorem]{Question}
\newtheorem*{problem*}{Problem}
\theoremstyle{definition}
\newtheorem*{exercise*}{Exercise}
\theoremstyle{remark}
\newtheorem{remark}[theorem]{Remark}
\newtheorem*{remark*}{Remark}
\newtheorem*{remarks*}{Remarks}
\newtheorem*{claim*}{Claim}
\newcommand{\subclass}[1]{}
\newcommand{\enumTi}[1]{\renewcommand{\theenumi}{#1}}
\newcommand{\alphenumi}{\enumTi{\alph{enumi}}}
\newcommand{\romenumi}{\enumTi{\roman{enumi}}}
\renewcommand{\em}{\sl}
\DeclareMathOperator{\rk}{rk}
\newcommand{\lt}{\left}
\newcommand{\rt}{\right}
\newcommand{\abs}[1]{{\lt\lvert{#1}\rt\rvert}}
\newcommand{\FF}{\mathbb{F}}
\newcommand{\NN}{\mathbb{N}}
\newcommand{\QQ}{\mathbb{Q}}
\newcommand{\RR}{\mathbb{R}}
\newcommand{\ZZ}{\mathbb{Z}}
\newcommand{\kk}{\mathbb{k}}
\newlength{\algotabbingwidth}
\renewcommand{\paragraph}[1]{\par\smallskip\noindent{#1}}
\newcommand{\Mt}{M^{\scriptscriptstyle(t)}}
\newcommand{\nt}{n^{\scriptscriptstyle(t)}}
\newcommand{\Mr}[1][r]{M^{\scriptscriptstyle(#1)}}
\begin{document}

\title[Fooling-sets and rank]{Fooling-sets and rank}%


\newsavebox{\fna}\sbox{\fna}{{\footnotesize${}^{a}$}}
\newsavebox{\fnb}\sbox{\fnb}{{\footnotesize${}^{b}$}}
\newsavebox{\fnc}\sbox{\fnc}{{\footnotesize${}^{c}$}}
\newsavebox{\fnd}\sbox{\fnd}{{\footnotesize${}^{d}$}}
\author[Mirjam Friesen]{Mirjam Friesen\usebox{\fna}}%
\author[Aya Hamed]{Aya Hamed\usebox{\fnb}}
\author[Troy Lee]{Troy Lee\usebox{\fnc}}
\author[Dirk Oliver Theis]{Dirk Oliver Theis\usebox{\fnd}}%
\date{{Thu Jan 16 16:32:31 EET 2014}\\[1ex] %
  ${}^a$ Faculty of Mathematics, Otto von Guericke University Magdeburg, Germany\\
  ${}^b$ {Work done in part while visiting the Centre for Quantum Technologies, Singapore}\\
  ${}^c$ {Nanyang Technological University and Centre for Quantum Technologies, Singapore.  Research supported by a National Research Foundation Fellowship.}\\
  ${}^d$ Institute of Computer Science, University of Tartu, Estonia.  \tiny\texttt{dirk.oliver.theis@ut.ee}}

\begin{abstract}
  An $n\times n$ matrix $M$ is called a \textit{fooling-set matrix of size $n$} if its diagonal entries are nonzero and $M_{k,\ell} M_{\ell,k} = 0$ for every $k\ne \ell$.  Dietzfelbinger, Hromkovi{\v{c}}, and Schnitger (1996) showed that $n \le (\rk M)^2$, regardless of over which field the rank is computed, and asked whether the exponent on $\rk M$ can be improved.

  \noindent%
  We settle this question.
  In characteristic zero, we construct an infinite family of rational fooling-set matrices with size $n = \binom{\rk M+1}{2}$.
  In nonzero characteristic, we construct an infinite family of matrices with $n= (1+o(1))(\rk M)^2$.
\end{abstract}
\maketitle


\section{Introduction}
An $n\times n$ matrix~$M$ over a field~$\kk$ is called a \textit{fooling-set matrix of size~$n$} if
\begin{subequations}\label{eq:def-fool}
  \begin{align}
    M_{kk} &\ne 0 &&\text{ for all~$k$ (its diagonal entries are all nonzero), and} \label{eq:def-fool:diag}\\
    M_{k,\ell} \, M_{\ell,k} &= 0 &&\text{ for all $k\ne \ell$.}  \label{eq:def-fool:off-diag}
  \end{align}
\end{subequations}
Note that the definition depends only on the zero-nonzero pattern of~$M$.  The word ``fooling set'' originates from Communication Complexity, but the concept is used under different names in other contexts (see Section~\ref{sec:connect}).

In Communication Complexity and Combinatorial Optimization fooling-set matrices are used to show lower bounds on other numerical properties of interest.  To do this, one wants to find a large fooling-set (sub-)matrix contained in a given matrix~$A$, where permutation of rows and columns is allowed.  Since large fooling-set submatrices are typically difficult to identify (deciding whether a fooling-set submatrix of given size exists in a given matrix was recently shown to be NP-hard~\cite{Shitov13fool}), it is desirable to upper-bound the size of a fooling-set matrix one may possibly hope for in terms of easily computable properties of~$A$.

Dietzfelbinger, Hromkovi{\v{c}}, and Schnitger (\cite[Thm.~1.4]{DietzfelbingerHromkovicSchnitger96}, or see~\cite[Lemma~4.15]{KushilevitzNisan97}; cf.~\cite{KlauckDewolf13,FioriniKaibelPashkovichTheis13}) proved that the rank of a fooling-set matrix of size~$n$ is at least $\sqrt n$, i.e.,
\begin{equation}\label{eq:rk-of-fool}
  n \le (\rk_{\kk} M)^2.
\end{equation}
This bound follows as $\rk_{\kk} I_n = \rk_{\kk} M \circ M^T \le (\rk_{\kk} M)^2$, where $I_n$ is the identity matrix of size $n$ and $\circ$ denotes entrywise product.  This inequality gives such an upper bound on the largest fooling-set submatrix in terms of the easily computable rank of~$A$.

Dietzfelbinger et al.\ asked the question whether the exponent on the rank in the right-hand side of~\eqref{eq:rk-of-fool} can be improved or not \cite[Open Problem~2]{DietzfelbingerHromkovicSchnitger96}.  This problem is stated specifically for 0/1-matrices in their paper, mirroring the particular Communication Complexity situation studied there.  Klauck and de Wolf~\cite{KlauckDewolf13}, however, gave applications and pointed out the importance for Communication Complexity of the question regarding general (i.e., not 0/1) matrices.  For applications in Combinatorial Optimization, 0/1 matrices play no special role.

Currently, the examples (attributed to M.~H\"uhne in~\cite{DietzfelbingerHromkovicSchnitger96}) of fooling-set matrices~$M$ with smallest rank are such that $n \approx (\rk_{\FF_2} M)^{\log_4 6}$ ($\log_4 6 = 1.292\dots$); for general matrices, Klauck and de Wolf~\cite{KlauckDewolf13} have given examples with $n \approx (\rk_\QQ M)^{\log_3 6}$ ($\log_3 6 = 1.63\dots$).

\paragraph{\bf In this paper, we settle this question.}
Firstly, for the case that $\kk$ has nonzero characteristic, we prove that the inequality~\eqref{eq:rk-of-fool} is asymptotically tight.  Notably, not only is the exponent on the rank in inequality~\eqref{eq:rk-of-fool} best possible, but so is the constant (one) in front of the rank.  We do this by constructing an infinite family fooling-set matrices~$M$ over $\kk=\FF_p$ of size~$n$, for with $n= (1+o(1))(\rk M)^2$.  The construction is based on a periodic sequence involving binomial coefficients.\footnote{%
  An extended abstract of this part of the current paper appeared in the EuroComb'13 proceedings~\cite{FriesenTheis13}.
} %

Secondly, in characteristic zero, we prove that the inequality is best possible up to a multiplicative constant, by constructing, for infinitely many~$n$, fooling-set matrices~$M$ over $\kk=\QQ$ of size~$n$, with $n = \binom{\rk M+1}{2}$.  This construction is inspired by the relations between binomial coefficients which used in the nonzero characteristic.

The method used in all the \emph{earlier} examples mentioned above of fooling-set matrices with small rank was the following: One conjures up a single, small fooling-set matrix~$M^0$ (of size, say, 6), determines its rank (say, 3), and then uses the tensor-powers of~$M^0$ (which are fooling-set matrices, too).  With these numerical values, from~$M^0$, one obtains $\log_36$ as a lower bound on the exponent on the rank in~\eqref{eq:rk-of-fool}.

Our constructions are departures from this approach.  In the characteristic $k>0$ case our matrices are circulant.  For the characteristic $k=0$ case, the matrices have a more complicated block structure, but each block is Toeplitz.

\paragraph{\bf Organization of this paper.}  %
In the next section we will explain some of the connections of the fooling-set vs.\ rank problem with Combinatorial Optimization and Graph Theory concepts.
In Section~\ref{sec:construction}, we prove our result for nonzero characteristic, and in Section~\ref{sec:chzero}, we prove the result for characteristic zero.

In the final section, we discuss some consequences and point to some questions which remain open.

\section{Some Remarks on the Importance of Fooling-Set Matrices}\label{sec:connect}
While the fooling-set size vs.\ rank problem is of interest in its own right as a minimum-rank type problem in Combinatorial Matrix Theory, fooling-set matrices are connected to other areas of Mathematics and Computer Science.

\paragraph{\bf In Polytope Theory,} %
given a polytope~$P$, sizes of fooling-set submatrices of appropriately defined matrices provide lower bounds to the number of facets of any polytope~$Q$ which can be mapped onto~$P$ by a projective mapping. We sketch the connection (see~\cite{FioriniKaibelPashkovichTheis13} for the details).

Let~$P$ be a polytope.  Let $A=A(P)$ be a matrix whose rows are indexed by the facets of~$P$ and whose columns are indexed by the vertices of~$P$, and which satisfies
$A_{F,v} = 0$, if $v \in F$, and $A_{F,v} \ne 0$, if $v \notin F$.
The following was first observed by Yannakakis (see~\cite{FioriniKaibelPashkovichTheis13} for a direct proof).
\begin{theorem}[\cite{Yannakakis91}]
  If~$A$ has a fooling-set submatrix of size~$n$, then every polytope~$Q$ which can be mapped onto~$P$ by a projective mapping has at least~$n$ facets.
\end{theorem}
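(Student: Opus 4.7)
The plan is to follow the non-negative-factorization route pioneered by Yannakakis: (i) pass from the zero-pattern matrix $A$ to the \emph{slack matrix} $S$ of $P$, which has the same support but non-negative real entries; (ii) use the hypothesis that $Q$ is an extension of $P$ to factor $S=UV$ with $U,V\ge 0$ and inner dimension equal to the number of facets of $Q$; (iii) extract from the fooling-set condition an injection of the rows of the fooling-set submatrix into the columns of $U$.

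For (i), I would fix a facet description $a_F^\top x\le b_F$ of each facet $F$ of $P$ and set $S_{F,v}:=b_F-a_F^\top v\ge 0$, which vanishes iff $v\in F$. Thus $S$ has the same support as $A$, and any $n\times n$ fooling-set submatrix of $A$, indexed by pairs $(F_k,v_k)_{k=1}^n$, transfers to $S$ in the sense that $S_{F_k,v_k}>0$ for all $k$ while $S_{F_k,v_\ell}\cdot S_{F_\ell,v_k}=0$ for $k\ne\ell$.

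For (ii), write $Q=\{y\in\RR^N:By\le d\}$ where $B$ has $m$ rows (one per facet of $Q$), and let $\pi$ be the projection with $\pi(Q)=P$. Since $b_F-a_F^\top\pi(y)\ge 0$ for every $y\in Q$, affine Farkas produces $u_F\in\RR^m$ with $u_F\ge 0$ such that
\[
  b_F - a_F^\top\pi(y) \;=\; u_F^\top(d-By) \quad\text{for every $y\in\RR^N$.}
\]
For each vertex $v$ of $P$, pick a preimage $q_v\in Q$ and set $V_{j,v}:=d_j-(Bq_v)_j\ge 0$ and $U_{F,j}:=(u_F)_j$; substitution gives $S=UV$ with both factors non-negative and inner dimension $m$.

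For (iii), I observe that $S_{F_k,v_k}>0$ forces some $j(k)\in\{1,\dots,m\}$ with $U_{F_k,j(k)}\,V_{j(k),v_k}>0$. If ever $j(k)=j(\ell)=:j$ for $k\ne\ell$, then $U_{F_k,j}\,V_{j,v_\ell}>0$ and $U_{F_\ell,j}\,V_{j,v_k}>0$, so both $S_{F_k,v_\ell}$ and $S_{F_\ell,v_k}$ are positive, contradicting the fooling-set condition. Hence $k\mapsto j(k)$ is injective into $\{1,\dots,m\}$, giving $n\le m$. The one real subtlety lies in step (ii): ensuring that ``mapped onto $P$ by a projective mapping'' really does deliver a factorization in this form. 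I would handle this by homogenizing $Q$ and $P$ so that the projective map becomes linear, and then running Farkas in the homogenized coordinates; everything else is short and essentially mechanical.
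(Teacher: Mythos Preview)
The paper does not give its own proof of this theorem; it is stated as a known result of Yannakakis, with the reader referred to~\cite{FioriniKaibelPashkovichTheis13} for a direct argument. Your outline is exactly the standard Yannakakis proof (slack matrix $\to$ nonnegative factorization of the slack matrix from an extension via LP duality $\to$ fooling set forces an injection into the inner index set), so in spirit it matches what those references contain.

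One small correction in step~(ii): affine Farkas only yields
\[
  b_F-a_F^\top\pi(y)\;=\;u_F^\top(d-By)+c_F
\]
for some constant $c_F\ge 0$ (equivalently, $u_F^\top d\le b_F-a_F^\top t$ when $\pi(y)=Ly+t$), so the identity need not hold on all of $\RR^N$ without the extra term. This is harmless for the conclusion---either append one extra nonnegative rank-one summand carrying the $c_F$'s, or simply observe that the injectivity argument in~(iii) only uses $S\ge UV$ entrywise with both factors nonnegative---but it should be stated. You correctly flag the projective-versus-affine issue; homogenizing (passing to cones) is indeed the standard fix, and is also how~\cite{FioriniKaibelPashkovichTheis13} sets things up.
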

Since for any fooling-set submatrix of size~$n$ of~$A$, the inequality
\begin{equation}\label{eq:dietz:ptp}
  n \le (\dim P+1)^2.
\end{equation}
follows from~\eqref{eq:rk-of-fool} (cf.~\cite{FioriniKaibelPashkovichTheis13}), the following variant of Dietzfelbinger et al.'s question is of pertinence in Polytope Theory: \textit{Can the fooling-set size vs.\ dimension inequality~\eqref{eq:dietz:ptp} be improved for polytopes?}  Our Theorem~\ref{thm:main:0} below yields the following corollary.

\begin{corollary}
  For infinitely many~$d$, there is a a polytope~$P$ of dimension~$d$ such that the matrix~$A(P)$ contains a fooling-set submatrix of size $\Omega(\sqrt d)$.
\end{corollary}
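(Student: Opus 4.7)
The plan is to combine Theorem~\ref{thm:main:0} with a polytope realization of its rank-$r$ fooling-set matrices. By Theorem~\ref{thm:main:0}, for infinitely many~$r$ there is a rational fooling-set matrix $M$ of size $n = \binom{r+1}{2}$ with $\rk_\QQ M = r$, so $r = \Theta(\sqrt n)$. To realize~$M$ as a submatrix of $A(P)$ for a suitable polytope~$P$, I would fix any rank-$r$ factorization $M = UV^\top$ with $U, V \in \QQ^{n\times r}$, interpret the columns of $V^\top$ as candidate vertices $v_j \in \RR^r$ and the rows of~$U$ as normals of candidate facet-defining inequalities $u_i^\top x \le c_i$, and build a polytope from these halfspaces and vertices so that the induced $(F_i, v_j)$ submatrix of $A(P)$ reproduces the zero/nonzero pattern of~$M$ (since $u_i^\top v_j = M_{ij}$ by construction, the zero entries correspond to orthogonality conditions that can be encoded as vertex-facet incidences after an appropriate choice of the shifts $c_i$).

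The main obstacle is turning this algebraic picture into an honest polytope with the required combinatorial structure: the sign pattern of~$M$ may obstruct simultaneous validity of the halfspaces, some candidate $v_j$ may lie strictly inside $P$, and some candidate hyperplanes may fail to support facets of the correct dimension. Overcoming these issues is possible through pattern-preserving modifications of~$M$ (e.g., conjugating by diagonal matrices of nonzero scalars, negating rows/columns, or passing to a companion matrix $M'$ with the same zero/nonzero pattern but a controlled sign structure), embedding into an ambient $\RR^{r + O(1)}$ together with an auxiliary bounding simplex, or passing to the polar dual. Since the corollary's bound is quite forgiving, any construction in which $d = \dim P$ is polynomially bounded in~$r$ suffices: from $n = \Theta(r^2)$ and $d = O(r^C)$ we obtain $n = \Omega(d^{2/C})$, and already $C = 4$ yields the claimed $n = \Omega(\sqrt d)$. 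In fact, a tight realization with $d = O(r)$ would yield the much stronger $n = \Omega(d^2)$, matching the upper bound~\eqref{eq:dietz:ptp} up to constants.
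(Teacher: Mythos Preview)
Your approach matches the paper's. The paper itself does not give a full proof, stating explicitly: ``We do not prove this corollary in this paper, because it would require a considerable amount of polytope theory overhead to arrive at a comparatively easy consequence of Theorem~\ref{thm:main:0}.'' Its sketch is the same rank-factorization-to-polytope route you outline: factor a suitable matrix~$A'$, form the associated pointed convex polyhedral cone, slice with a hyperplane to get~$P$, and invoke Proposition~5.4 of~\cite{FioriniKaibelPashkovichTheis13} to dispose of rows/columns of~$A'$ that fail to become facets/vertices. Your list of potential fixes (diagonal scaling, polar duality, auxiliary bounding simplex) lives at the same level of detail as the paper's sketch; the paper is only more concrete in naming the cone construction and the external proposition it defers to.

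One caution: do not lean on your ``forgiving'' reading of the bound. The stated $\Omega(\sqrt d)$ is almost certainly a slip for $\Omega(d^2)$ --- the corollary is meant to witness that~\eqref{eq:dietz:ptp} is asymptotically tight, and already the $d$-simplex has $A(P)$ equal (after permutation) to a nonzero diagonal matrix, hence a fooling-set submatrix of size $d+1$, so a literal $\Omega(\sqrt d)$ would be content-free and would not need Theorem~\ref{thm:main:0} at all. You should therefore aim for a realization with $d = O(r)$, as the paper's cone-and-slice sketch intends, rather than falling back on $d = O(r^4)$.
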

We do not prove this corollary in this paper, because it would require a considerable amount of polytope theory overhead to arrive at the a comparatively easy consequence of Theorem~\ref{thm:main:0}.  As a quick sketch, let the following suffice.  From a given matrix~$A$, one derives a pointed convex polyhedral cone by taking a rank factorization of~$A'$.  Intersecting the cone with a hyperplane gives the desired polytope~$P$.  The presence of rows/columns in~$A'$ which do not correspond to facets/vertices of~$P$ is not a problem by Proposition~5.4 in \cite{FioriniKaibelPashkovichTheis13}.

In \textbf{Combinatorial Optimization,} the polytope theoretic situation occurs for particular families of polytopes which arise from combinatorial optimization problems.  Sizes of fooling-set matrices then yield lower bounds to the minimum sizes of Linear Programs for combinatorial optimization problems~\cite{Yannakakis91}.
See~\cite{FioriniKaibelPashkovichTheis13} for bounds based on fooling sets for a number of combinatorial optimization problems, including bipartite matching.

In the Polytope Theory / Combinatorial Optimization applications, we typically have $\kk=\QQ$, and the rank of the large matrix~$A$ is known.  However, since the definition of a fooling-set matrix depends only on the zero-nonzero pattern, changing the field from $\QQ$ to $\kk'$ and replacing the nonzero rational entries of~$A$ by nonzero numbers in~$\kk'$ may yield a matrix with lower rank and hence a better upper bound on the size of a fooling-set matrix.

\paragraph{\bf In Computational Complexity,} %
fooling-set matrices provide lower bounds for the communication complexity of Boolean functions (see, e.g., \cite{AroraBarak09,KushilevitzNisan97,LovaszSaks88Moeb,DietzfelbingerHromkovicSchnitger96,KlauckDewolf13}), and for the number of states of an automaton accepting a given language (e.g., \cite{GruberHolzer06}).

As an example from Communication Complexity where the ``fooling-set method'' can be seen to yield a poor lower bound is the inner product function
\begin{equation*}
  f(x,y) = \sum_{j=1}^n x_jy_j,\qquad\text{for $x,y\in \ZZ_2^n$.}
\end{equation*}
The rank of the associated $2^n\times 2^n$-matrix is~$n$, hence, by~\eqref{eq:rk-of-fool}, there is no fooling-set submatrix larger than $n^2$.

\paragraph{\bf In Graph Theory,} %
a fooling-set matrix (up to permutation of rows and columns) can be understood as the incidence matrix of a bipartite graph containing a perfect cross-free matching.  Recall that a matching in a bipartite graph~$H$ is called \textit{cross-free} if no two matching edges induce a~$C_4$-subgraph of~$H$.

Cross-free matchings are best known as a lower bound on the size of biclique coverings of graphs (e.g.\ \cite{Dawande03,JuknaKulikov09}).  A \textit{biclique covering} of a graph~$G$ is a collection of complete bipartite subgraphs of~$G$ such that each edge of~$G$ is contained in at least one of these bipartite subgraphs.  If a cross-free matching of size~$n$ is contained as a subgraph in~$G$, then at least~$n$ bicliques are needed to cover all edges of~$G$.  For some classes of graphs, this is a sharp lower bound on the biclique covering number~\cite{Dawande03,SotoTelha11}.

\paragraph{\bf In Matrix Theory,} the maximum size of a fooling-set submatrix is known under a couple of different names, e.g.~as independence number \cite[Lemma 2.4]{CohenRothblum93}), or as intersection number.  For some semirings, this number provides a lower bound for the factorization rank of the matrix over the semiring.


\paragraph{\bf In each of these areas,} %
fooling-set matrices are used as lower bounds.  Upon embarking on a search for a big fooling-set matrix in a large, complicated matrix~$A$, one is interested in an \textit{a priori} upper bound on their sizes and thus the potential usefulness of the lower bound method.


\section{Preliminaries}
We will make use of binomial coefficients and a few of their standard properties.  As multiple extensions of binomial coefficients to negative arguments are possible, we fix here the definition we use (following~\cite{KnuthGrahamPatashnik94}).  For intgers $n,k$, let
\begin{equation*}
  \binom{n}{k} := \begin{cases}
    \dfrac{n(n-1) \cdots (n-k+1)}{k(k-1) \cdots 1}, &\text{if } k \ge 0, \\
    0, &\text{if } k <0.
  \end{cases}
\end{equation*}
Note that the \textit{symmetry identity}
\begin{equation*}
  \binom{n}{k}=\binom{n}{n-k}, \qquad\text{ for all $n\ge 0$ and all integers $k$,}
\end{equation*}
and the \textit{addition formula}
\begin{equation*}
  \binom{n}{k}=\binom{n-1}{k} + \binom{n-1}{k-1}, \qquad\text{ for all integers $n,k$,}
\end{equation*}
hold.

\numberwithin{theorem}{section}
\section{Characteristic $p>0$: Fooling-Set Matrices from Sequences}\label{sec:construction}
For a prime number~$p$, we denote by $\FF_p$ the finite field with~$p$ elements.  The following is the accurate statement of our result.

\begin{theorem}\label{thm:main:nz}
  For every prime number~$p$, there is a family of fooling-set matrices $\Mt$ over $\FF_p$ of size~$\nt$, $t=1,2,3,\dots$, such that $\nt \to \infty$, and
  \begin{equation*}
    \frac{ \nt }{ (\rk_{\FF_p} \Mt)^2 } \;\longrightarrow 1.
  \end{equation*}
\end{theorem}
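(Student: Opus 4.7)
The plan is to construct circulant matrices $\Mt$ of size $n := \nt = p^t$ over $\FF_p$, encoded by a polynomial $f_t(x) = \sum_{k=0}^{n-1} a_k x^k \in \FF_p[x]$ (so that $\Mt_{ij} = a_{(j-i) \bmod n}$). Two standard reformulations reduce the theorem to a polynomial-design problem. First, the fooling-set conditions translate into $a_0 \ne 0$ together with $a_k a_{n-k} = 0$ for every $k \in \{1, \ldots, n-1\}$; equivalently, the support of $f_t$, viewed in $\ZZ/n\ZZ$, contains no pair $\{k, n-k\}$ with $k \ne 0$. Second, since $x^n - 1 = (x-1)^n$ in $\FF_p[x]$ when $n = p^t$, the rank of the circulant equals $n - \nu_1(f_t)$, where $\nu_1(f_t)$ is the multiplicity of $1$ as a root of $f_t$.

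My strategy is to write $f_t(x) = (x-1)^{n-r} h_t(x)$ with $\deg h_t \le r - 1$ and $h_t(1) \ne 0$ (forcing rank exactly $r$), and then to design the multiplier $h_t$ combinatorially so that the support of $f_t$ avoids complementary pairs. By Kummer's and Lucas' theorems, the vanishing pattern of $\binom{n-r}{k}$ modulo $p$ is governed by the base-$p$ digits of $n-r$ and $k$, giving fine-grained control over the support of $(x-1)^{n-r}$; the convolution with $h_t$ permits further targeted cancellations. The right choice should involve coefficient sequences built from binomial coefficients $\binom{N}{k} \bmod p$ for a suitably tuned $N$, producing a periodic structure aligned with the base-$p$ digits of $n$, as previewed in the paper's introduction.

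The main obstacle will be the tension between the two goals. Making $n - r$ close to $n$ (so $r$ is near $\sqrt{n}$) tends to produce supports that are large and base-$p$-symmetric, violating the fooling condition: the extreme case $f = (x-1)^{n-1}$ has full support $\{0, 1, \ldots, n-1\}$, a worst case for fooling. Conversely, the naive fix of restricting the support to $\{0, 1, \ldots, n/2 - 1\}$ forces $\deg f_t \le n/2 - 1$, whence $r \ge n/2 + 1$ and $n/r^2 \le 4/n \to 0$, the opposite of what we want. Obtaining $n/r^2 \to 1$ thus requires exploiting the subtle non-uniform vanishing of $\binom{n-r}{k} \bmod p$ so that the support, as a subset of $\ZZ/n\ZZ$, becomes disjoint from its additive inverse. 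I expect the verification to proceed inductively in $t$, using the identity $(x-1)^{p^j} = x^{p^j} - 1$ to lift support-control from one level of the base-$p$ expansion to the next, while bypassing the tensor-power trick (which, as noted in the introduction, is insufficient on its own to approach ratio $1$).
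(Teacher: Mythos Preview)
Your submission is a strategy outline, not a proof. The circulant/polynomial framework you set up is correct: for an $n\times n$ circulant over $\FF_p$ with $n=p^t$, the rank is $n-\nu_1(f_t)$, and the fooling-set condition is exactly that the support of $f_t$ in $\ZZ/n\ZZ$ meets its negative only in $\{0\}$. But after that point you never actually construct anything. You do not specify $r$, you do not specify $h_t$, and the central difficulty you yourself identify --- that forcing $(x-1)^{n-r}\mid f_t$ with $r\approx\sqrt n$ tends to create a support that is \emph{not} disjoint from its negation --- is left unresolved. Sentences like ``the right choice should involve coefficient sequences built from binomial coefficients'' and ``I expect the verification to proceed inductively in $t$'' are exactly the places where the proof has to happen, and there is nothing there. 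As it stands, you have reformulated the problem but not solved it.

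For contrast, the paper's argument is also circulant but takes a different and fully explicit route. It does \emph{not} take $n=p^t$; instead it sets $r=p^t+1$ and $n=r(r-1)+1$, and defines the generating sequence by the short linear recurrence $f(k+r)=-f(k)-f(k+1)$ with $f(0)=1$, $f(1)=\cdots=f(r-1)=0$. The recurrence immediately gives $\rk M\le r$ (every row beyond the $r$th is a combination of earlier ones), bypassing any polynomial-divisibility computation. The fooling-set property is then obtained not by Lucas/Kummer bookkeeping on a product $(x-1)^{n-r}h_t$, but by showing directly that each block $\{jr,\dots,(j+1)r-1\}$ of the sequence contains an explicit run of zeros of length $r-1-j$, and that $n$ is a period of $f$; these two facts together force $f(k)f(-k)=0$ for all $k\not\equiv 0\pmod n$. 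That concrete zero-block structure is precisely the missing ingredient in your plan: it is what replaces your hoped-for ``inductive support control,'' and it is why the paper's choice of $n=r(r-1)+1$ (rather than a prime power) is the natural one.
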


As noted above, we use linear recurring sequences.  For every~$t$, we construct an $\nt$-periodic function, which gives us a fooling-set matrix of size~$\nt$.

\paragraph{\bf We now describe that construction.}  %
Let~$p$ be a prime number and $r \ge 2$ an integer.  Define the function $f\colon \ZZ\to \FF_p$ by the recurrence relation
\begin{subequations}\label{eq:def-f}
  \begin{equation}\label{eq:def-f:recrel}
    f(k+r) = -f(k) - f(k+1) \quad\text{for all $k\in \ZZ$}
  \end{equation}
  and the initial conditions
  \begin{equation}\label{eq:def-f:initial}
    f(0) = 1\text{, and } f(1) = \ldots = f(r-1) = 0.
  \end{equation}
\end{subequations}

Fix an integer $n > r$.  From the sequence, we define an $n\times n$ matrix as follows.  For ease of notation, the matrix indices are taken to be in $\{0,\dots,n-1\}\times \{0,\dots,n-1\}$.  We let
\begin{equation}\label{eq:def-M}
  M_{k,\ell} := f(k-\ell).
\end{equation}

It is fairly easy to see that $\rk M \le r$.

\begin{lemma}\label{lem:rk-M}
  The rank of~$M$ is at most~$r$.
\end{lemma}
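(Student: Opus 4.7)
The plan is to exploit the combination of the Toeplitz structure of $M$ (entries depend only on $k-\ell$) with the three-term recurrence~\eqref{eq:def-f:recrel}. Let $\rho_k \in \FF_p^n$ denote the $k$-th row of $M$, whose $\ell$-th coordinate is $f(k-\ell)$. Substituting the integer $m := k-\ell$ into the identity $f(m+r) = -f(m) - f(m+1)$, which holds for \emph{all} $m \in \ZZ$, one obtains
\begin{equation*}
  M_{k+r,\ell} \;=\; f(k+r-\ell) \;=\; -f(k-\ell) - f(k+1-\ell) \;=\; -M_{k,\ell} - M_{k+1,\ell},
\end{equation*}
valid for every column index $\ell$. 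In vector form this reads $\rho_{k+r} = -\rho_k - \rho_{k+1}$, so the recurrence on scalars lifts directly to a recurrence on the rows of $M$.

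From this row recurrence I would finish by a straightforward induction on $k$: for every $k \in \{0,1,\ldots,n-1\}$ with $k \ge r$, the row $\rho_k$ lies in $\spn\{\rho_0, \rho_1, \ldots, \rho_{r-1}\}$. Indeed, the base cases $k = 0,\ldots,r-1$ are trivial, and if the claim holds for all indices less than $k+r$, then $\rho_{k+r} = -\rho_k - \rho_{k+1}$ is a linear combination of vectors already in that span. Therefore the row space of $M$ is generated by at most $r$ vectors, yielding $\rk M \le r$.

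There is no real obstacle to this argument; the point is simply to notice that the Toeplitz pattern turns the coefficient recurrence into a row recurrence with no index shift in $\ell$, and that the initial segment of $r$ rows suffices to generate the rest. I would not bother checking that $\rho_0,\ldots,\rho_{r-1}$ are themselves independent — only the upper bound on the rank is asserted in the lemma.
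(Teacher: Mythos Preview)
Your argument is correct and is essentially the paper's own proof: both use the recurrence~\eqref{eq:def-f:recrel} together with the Toeplitz structure to write each row with index $\ge r$ as a linear combination of two earlier rows, hence by induction of the first~$r$ rows. The paper states this in one line as $M_{k,\star} = -M_{k-r,\star} - M_{k-r+1,\star}$ for $k\ge r$; your version merely spells out the induction more explicitly.
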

\begin{proof}
  From~\eqref{eq:def-f:recrel}, for $k \ge r$, we deduce the equation $M_{k,\star} = -M_{k-r,\star} - M_{k-r+1,\star}$.
  Hence, each of the rows $M_{k,\star}$, $k \ge r$, is a linear combination of the first~$r$ rows of~$M$.
\end{proof}

It can be seen that the rank is, in fact, equal to~$r$: The top-left $r\times r$ submatrix is non-singular because it is upper-triangular with nonzeros along the diagonal.

\paragraph{\bf In the remainder of the section, we derive the fooling-set property.}
First, we reduce the fooling-set property~\eqref{eq:def-fool} of~$M$ to a property of the function~$f$.

\begin{lemma}\label{lem:fool-eq}\mbox{}
  The matrix~$M$ defined in~\eqref{eq:def-M} is a fooling-set matrix, if and only if,
  \begin{equation}\label{eq:cross-symmetry}
    f(k) f(-k) = 0  \quad\text{ for all $k \in \{1,\dots,n-1\}$.}
  \end{equation}
\end{lemma}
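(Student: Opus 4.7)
The proof is essentially a direct translation of the two defining conditions of a fooling-set matrix into statements about $f$, using the Toeplitz structure $M_{k,\ell} = f(k-\ell)$.

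First I would dispose of the diagonal condition \eqref{eq:def-fool:diag}: for every $k \in \{0,\dots,n-1\}$ one has $M_{k,k} = f(0) = 1 \ne 0$ by the initial condition \eqref{eq:def-f:initial}, so \eqref{eq:def-fool:diag} holds unconditionally. Hence the fooling-set property reduces to the off-diagonal condition \eqref{eq:def-fool:off-diag}.

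Next I would rewrite \eqref{eq:def-fool:off-diag}: for $k \ne \ell$ in $\{0,\dots,n-1\}$,
\begin{equation*}
  M_{k,\ell}\,M_{\ell,k} = f(k-\ell)\,f(\ell-k).
\end{equation*}
Setting $m := k-\ell$, as the pair $(k,\ell)$ ranges over the off-diagonal index pairs of $\{0,\dots,n-1\}^2$, the value $m$ ranges over $\{-(n-1),\dots,-1,1,\dots,n-1\}$. Thus \eqref{eq:def-fool:off-diag} is equivalent to $f(m)f(-m) = 0$ for every such $m$. Since the product $f(m)f(-m)$ is invariant under $m \mapsto -m$, it is enough to require vanishing for $m \in \{1,\dots,n-1\}$, which is exactly \eqref{eq:cross-symmetry}. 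Both implications (only-if and if) follow by reading this equivalence in either direction.

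There is no real obstacle here; the lemma is a bookkeeping step that isolates the analytic content of the fooling-set property (condition \eqref{eq:cross-symmetry} on $f$) from the combinatorial structure of $M$. The actual work of the section will be to verify \eqref{eq:cross-symmetry} for the recurrence-defined $f$, which this lemma makes possible.
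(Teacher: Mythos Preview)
Your argument is correct and follows essentially the same route as the paper: verify the diagonal condition from $f(0)=1$, rewrite $M_{k,\ell}M_{\ell,k}=f(k-\ell)f(-(k-\ell))$, and reduce the off-diagonal condition to~\eqref{eq:cross-symmetry}. If anything, you are slightly more explicit than the paper in handling the ``only if'' direction via the symmetry $m\mapsto -m$, whereas the paper states only the ``if'' direction in detail.
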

\begin{proof}
  It is clear from \eqref{eq:def-f:initial} and~\eqref{eq:def-M} that $M_{j,j} = f(0) = 1$ for all $j=0,\dots,n-1$, so it remains to verify~\eqref{eq:def-fool:off-diag}.
  Since
  \begin{equation*}
    M_{i,j} M_{j,i} = f(i-j) f(j-i) = f(i-j) f(-(i-j)),
  \end{equation*}
  if $f(k) f(-k) = 0$ for all $k=1,\dots,n-1$, then $M_{i,j} M_{j,i}$ is zero whenever $i\ne j$.  This proves~\eqref{eq:def-fool:off-diag}.
\end{proof}


Given appropriate conditions on $r$ and~$n$ (depending on~$p$), this condition on~$f$ can indeed be verified:

\begin{lemma}\label{lem:key-lemma}
  For all integers $t \ge 1$, if we let $r := p^t+1$ and $n := r(r-1)+1$, then $f(k)f(-k) = 0$ for all $k\in \ZZ\setminus n\ZZ$.
\end{lemma}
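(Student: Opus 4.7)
The plan is to pass to the algebraic closure $\overline{\FF_p}$ and derive a clean Fourier-type formula for $f(k)$ that makes the fooling-set property transparent. The pivotal observation is that in characteristic $p$ with $r = p^t + 1$, Frobenius gives $(1+x)^{p^t} = 1 + x^{p^t}$, so
\[ H(x) := 1 + x^{r-1} + x^r = (1+x)^r - x; \]
together with the identity $F(x)\, H(x) = 1 + x^{r-1}$ coming from the recurrence, this yields $F(x) = (1+x)^{p^t}/((1+x)^r - x)$.

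Next I would establish that $f$ is $n$-periodic by showing every root $\lambda$ of the characteristic polynomial $p(\lambda) := \lambda^r + \lambda + 1$ satisfies $\lambda^n = 1$. Substituting $u := 1 + 1/\lambda$, the equation $p(\lambda) = 0$ becomes $u^r = u - 1$; applying Frobenius and using $n = p^{2t} + p^t + 1$, short calculations give $u^{p^t} - 1 = -1/u$ and $u^{p^{2t}} - 1 = -1/u^{p^t}$, whence $(u - 1)^n = (u^{p^{2t}} - 1)(u^{p^t} - 1)(u - 1) = 1$, i.e.\ $\lambda^n = 1$. Since $p'(\lambda) = \lambda^{p^t}$ in characteristic $p$ shares no root with $p(\lambda)$, the roots of $p$ are simple, so $p(\lambda) \mid \lambda^n - 1$ and $f$ is $n$-periodic. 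Fixing a primitive $n$-th root of unity $\omega \in \overline{\FF_p}$ and letting $D := \{j \in \ZZ/n\ZZ : p(\omega^j) = 0\}$, we get $|D| = r$ and a Fourier expansion $f(k) = \sum_{j \in D} c_j\, \omega^{jk}$. A residue/partial-fraction computation (using $H'(x) = x^{r-1}$ in characteristic $p$) yields $c_j = -\lambda(\lambda^{p^t} + 1) = -\lambda^r - \lambda = 1$ for every $j \in D$, where $\lambda = \omega^j$; thus
\[ f(k) = \sum_{j \in D} \omega^{jk}. \]

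The crux is then to show that $D$ is a \emph{perfect difference set} in $\ZZ/n\ZZ$: every $d \not\equiv 0 \pmod n$ has a unique representation $d = j - j'$ with $j, j' \in D$. Equivalently, for each $n$-th root of unity $\mu \ne 1$ there is a unique root $\lambda$ of $p$ such that $\mu\lambda$ is also a root. Combining $p(\lambda) = 0$ with $p(\mu\lambda) = 0$ forces $(\mu^r - \mu)\lambda = 1 - \mu^r$; and $\mu^r \ne \mu$ because $\gcd(p^t, n) = 1$ rules out $\mu^{p^t} = 1$ for $\mu \ne 1$, so the only candidate is $\lambda_0 := (1 - \mu^r)/(\mu^r - \mu)$. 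Verifying $p(\lambda_0) = 0$ is the main obstacle: writing $\nu := \mu^{p^t}$ and using $\mu^n = 1$ to get $\mu^{r p^t} = 1/\mu$, one computes $\lambda_0^{p^t} = (\mu - 1)/(1 - \mu\nu)$, hence $\lambda_0^r = \lambda_0 \cdot \lambda_0^{p^t} = (\mu - 1)/(\mu(\nu - 1))$, and adding $\lambda_0$ telescopes to $\lambda_0^r + \lambda_0 = -1$, as needed.

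With the difference-set property in hand, for $k \not\equiv 0 \pmod n$ we obtain
\[ f(k)\, f(-k) = \sum_{j, j' \in D} \omega^{(j - j')k} = r + \sum_{d \ne 0} \omega^{dk} \equiv \sum_{d = 0}^{n-1} \omega^{dk} = 0 \pmod{p}, \]
using $r \equiv 1 \pmod p$ and vanishing of the full geometric sum. This proves $f(k) f(-k) = 0$ for all $k \in \ZZ \setminus n\ZZ$, as required.
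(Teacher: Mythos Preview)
Your argument is correct and genuinely different from the paper's.

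\textbf{What the paper does.} The paper proceeds entirely elementarily: first it proves by induction on the recurrence that each block $\{jr,\dots,(j+1)r-1\}$ contains a run of zeros of length $r-1-j$ (Lemma~\ref{lem:zero-blocks}); then it verifies $n$-periodicity by tracking the values $h(j,i):=f((j+1)r-i)$ and showing they obey a Pascal-type recursion producing $\binom{p^t}{i}\equiv 0$ for $0<i<p^t$ (Lemma~\ref{lem:periodicity}); finally, for each $k\in\{1,\dots,n-1\}$ it checks that either $k$ or $n-k$ lands in one of the zero runs.

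\textbf{What you do.} You pass to $\overline{\FF_p}$, identify the characteristic polynomial $p(\lambda)=\lambda^r+\lambda+1$, show its roots are simple $n$-th roots of unity via the substitution $u=1+1/\lambda$, and compute the partial-fraction coefficients to obtain the closed form $f(k)=\sum_{\lambda:\,p(\lambda)=0}\lambda^k$. The heart of your proof is that the root set is a \emph{perfect difference set} in the $n$-th roots of unity---in fact this is exactly the classical Singer difference set in $\ZZ/(q^2+q+1)\ZZ$ with $q=p^t$---and then $f(k)f(-k)=r-1+\sum_{d=0}^{n-1}\omega^{dk}\equiv 0$ follows from character orthogonality.

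\textbf{Trade-offs.} The paper's route is self-contained and needs no algebraic closure, but the periodicity verification is somewhat opaque. Your route is more conceptual: it explains \emph{why} the construction works (Singer difference sets) and gives an explicit formula for $f$; it also makes the choice $n=r(r-1)+1$ look inevitable rather than miraculous. On the other hand, it requires comfort with Frobenius manipulations and partial fractions over $\overline{\FF_p}$.

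\textbf{One small correction.} You write $p'(\lambda)=\lambda^{p^t}$; in fact $p'(\lambda)=r\lambda^{r-1}+1\equiv\lambda^{p^t}+1$ in characteristic~$p$. This does not affect the argument: if $p(\lambda)=0$ and $\lambda^{p^t}+1=0$ then $\lambda^r=\lambda\cdot\lambda^{p^t}=-\lambda$, whence $p(\lambda)=1\neq0$, so the roots are still simple.
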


Combining the above three lemmas, we can complete the proof of Theorem~\ref{thm:main:nz}.

\begin{proof}[Proof of Theorem~\ref{thm:main:nz}.]
  Let~$p$ be a prime number.
  For every integer $t\ge 1$, let $r := p^t+1$ and $\nt := r(r-1) +1$, and define the matrix $\Mt := M$ over $\FF_p$ as in~\eqref{eq:def-M}.
  By Lemma~\ref{lem:rk-M}, the rank of $\Mt$ is at most~$r$, and from Lemmas \ref{lem:fool-eq} and~\ref{lem:key-lemma} we conclude that $\Mt$ is a fooling-set matrix.  Hence, we have
  \begin{equation*}
    1 \ge \frac{ \nt }{ \rk_{\FF_p} (\Mt)^2 } \ge \frac{ r^2-r+1 }{r^2} \ge 1 - p^{-t}/4 \xrightarrow{t\to\infty} 1,
  \end{equation*}
  where the left-most inequality is from~\eqref{eq:rk-of-fool}.
\end{proof}

To prove Lemma~\ref{lem:key-lemma}, we need two more lemmas.  The first one states that in every section $\{jr,\dots,(j+1)r-1\}$, $j=0,1,\dots$, there is a block of zeros whose length decreases with~$j$.

\begin{lemma}\label{lem:zero-blocks}
  For $j=0,\dots,r-2$, we have
  \begin{equation}\label{eq:zero-block}
    f(jr + i ) = 0 \quad\text{for $i = 1,\dots, r-1-j$.}
  \end{equation}
\end{lemma}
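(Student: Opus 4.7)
The plan is to prove the lemma by induction on $j$, using the recurrence~\eqref{eq:def-f:recrel} to propagate zero-blocks from one section $\{jr,\dots,(j+1)r-1\}$ to the next.

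The base case $j=0$ is precisely the initial condition~\eqref{eq:def-f:initial}: it asserts $f(i)=0$ for $i=1,\dots,r-1$.

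For the inductive step, assume the claim holds at level $j-1$, i.e.\ $f((j-1)r+i')=0$ for all $i'\in\{1,\dots,r-j\}$. I would apply the recurrence~\eqref{eq:def-f:recrel} with $k=(j-1)r+i$ to get
\begin{equation*}
    f(jr+i) \;=\; -f\bigl((j-1)r+i\bigr) \;-\; f\bigl((j-1)r+i+1\bigr).
\end{equation*}
For any $i\in\{1,\dots,r-1-j\}$, both $i$ and $i+1$ lie in the range $\{1,\dots,r-j\}$, so the inductive hypothesis makes both terms on the right vanish, and the desired zero-block at level $j$ follows.

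There is no real obstacle here; the only point requiring attention is the bookkeeping of the block lengths. The zero-block shrinks by exactly one at each step precisely because the recurrence couples $f(k+r)$ to two consecutive values $f(k),f(k+1)$: a zero-block of length $\ell$ at level $j-1$ produces a zero-block of length $\ell-1$ at level $j$. Starting from length $r-1$ at $j=0$, this gives length $r-1-j$ at level $j$, matching the statement. The range $j\le r-2$ in the lemma is simply the range for which the resulting length $r-1-j$ is still positive.
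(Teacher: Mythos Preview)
Your proof is correct and is essentially identical to the paper's own argument: both proceed by induction on~$j$, with the base case given by the initial condition~\eqref{eq:def-f:initial} and the inductive step obtained by applying the recurrence~\eqref{eq:def-f:recrel} to see that a zero-block of length~$\ell$ at one level yields a zero-block of length~$\ell-1$ at the next. The only difference is cosmetic (you step from $j-1$ to~$j$, the paper from~$j$ to~$j+1$).
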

\begin{proof}
  Equation~\eqref{eq:zero-block} is true for $j=0$ by~\eqref{eq:def-f:initial}.
  Suppose~\eqref{eq:zero-block} holds for some $j<r-2$.  Then $f((j+1)r + i ) = 0$ for $i = 1,\dots, r-1-(j+1)$, because, by~\eqref{eq:def-f:recrel},
  \begin{equation*}
    f((j+1)r + i)
    =
    f(jr + i + r)
    = 
    -f(jr + i)  -  f(jr + (i+1))
    = -0 - 0
  \end{equation*}
  holds.
\end{proof}

Every function on~$\ZZ$ with values in a finite field which is defined by a (reversible) linear recurrence relation is periodic (cf.\ e.g.~\cite{LidlNiederreiter94}).  The second lemma establishes that a specific number~$n$ is a period of~$f$ as defined in~\eqref{eq:def-f}.

\begin{lemma}\label{lem:periodicity}
  If $r = p^t+1$ for some integer $t\ge1$, then $n := r(r-1) +1$ is a period of the function~$f$.
\end{lemma}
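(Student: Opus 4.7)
The plan is to translate periodicity of $f$ into a divisibility statement in $\FF_p[x]$. The space of all $\FF_p$-valued sequences on $\ZZ$ satisfying $g(k+r) = -g(k) - g(k+1)$ is an $\FF_p$-vector space on which the shift operator $S\colon g(\cdot)\mapsto g(\cdot+1)$ satisfies $P(S) = 0$, where $P(x) := x^r + x + 1$. Therefore, if $P(x)$ divides $x^n - 1$ in $\FF_p[x]$, then writing $x^n - 1 = q(x)P(x)$ and evaluating at $S$ gives $(S^n - I)f = q(S)P(S)f = 0$, which is exactly the statement that $n$ is a period of $f$. So the whole task reduces to proving $x^n \equiv 1 \pmod{P(x)}$ in $\FF_p[x]$.

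I would prove this by a short computation in the quotient ring $R := \FF_p[x]/(P(x))$. Writing $\bar x$ for the image of $x$, the defining relation is $\bar x^r = -\bar x - 1$. The key observation, which is precisely why $r$ is chosen to be $p^t+1$, is that $r-1 = p^t$ is a power of the characteristic, so the Frobenius gives the identity $(\bar x + 1)^{r-1} = \bar x^{r-1} + 1$. This lets me rewrite
\begin{equation*}
\bar x^{r(r-1)} = (\bar x^r)^{r-1} = (-1)^{r-1}(\bar x+1)^{r-1} = (-1)^{r-1}(\bar x^{r-1}+1),
\end{equation*}
and then one more application of $\bar x^r = -\bar x -1$, after multiplying by $\bar x$, collapses $\bar x^n$ to $(-1)^r$. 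A trivial parity check on $r = p^t+1$ (odd when $p=2$, even when $p$ is odd, and with signs vacuous in characteristic $2$ anyway) shows $(-1)^r = 1$ in every case. Hence $\bar x^n = 1$ in $R$, which is exactly the desired divisibility.

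I do not foresee any genuine obstacle here: the cleverness of the lemma is front-loaded into the specific choice $r = p^t + 1$, which makes the Frobenius identity available, after which the computation is essentially a short chain of rewrites. The only care needed is the small sign bookkeeping between characteristic $2$ and odd characteristic, and this is handled uniformly by the parity remark.
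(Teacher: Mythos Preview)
Your proposal is correct and takes a genuinely different route from the paper's own proof. The paper argues combinatorially: it introduces $h(j,i) := f((j+1)r - i)$, proves the recursion $h(j+1,i) = -h(j,i) - h(j,i-1)$ with boundary data $h(j,-1)=h(j,j+1)=0$, deduces $h(j,i) = (-1)^{j+1}\binom{j}{i}$, and finally invokes $\binom{p^t}{i}\equiv 0\pmod p$ to check directly that $f(n),f(n+1),\dots,f(n+r-1)$ reproduce the initial conditions~\eqref{eq:def-f:initial}. You instead pass to the characteristic polynomial $P(x)=x^r+x+1$ and show $P(x)\mid x^n-1$ in $\FF_p[x]$ via a two-line computation in $\FF_p[x]/(P(x))$, using the Frobenius identity $(\bar x+1)^{p^t}=\bar x^{p^t}+1$; this annihilates $S^n-I$ on the solution space of the recurrence and forces periodicity. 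Both proofs hinge on the same arithmetic fact (Frobenius, equivalently Lucas on $\binom{p^t}{i}$), but your argument packages it more cleanly, avoids any explicit bookkeeping of the values of~$f$, and generalizes transparently to any sequence satisfying~\eqref{eq:def-f:recrel}. The paper's approach, on the other hand, yields the intermediate identity $h(j,i)=(-1)^{j+1}\binom{j}{i}$, which gives more explicit information about~$f$ even though it is not needed elsewhere in the argument.
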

\begin{proof}
  In this proof, for convenience, we identify $\FF_p$ with the integers modulo~$p$.

  Consider $h(j,i) := f((j+1)r-i)$ for $i,j\in\ZZ$.  We have to show that
  \begin{subequations}\label{eq:periodicity:zZ}
    \begin{align}
      \label{eq:periodicity:zZ:0}
      h(r-1,0) &= 0.                              \\
      \label{eq:periodicity:zZ:middle}
      h(r-1,1) = \ldots = h(r-1,r-2) &= 0, \text{ and }\\
      \label{eq:periodicity:zZ:rminus1}
      h(r-1,r-1) &= 1.
    \end{align}
  \end{subequations}

  We will first prove the following claims.
  \begin{enumerate}[\it {Claim}~(a).]
  \item\label{claim:periodicity:rec} For all $i,j\in \ZZ$,
    \begin{equation*}
      h(j+1,i) = -h(j,i) -h(j,i-1).
    \end{equation*}
  \item \label{claim:periodicity:bd} For $j=0,\dots,r-3$
    \begin{equation*}
      h(j,-1) = 0, \ h(j,j+1) = 0.
    \end{equation*}
  \item\label{claim:periodicity:binom} For $j=0,\dots,r-2$ and $0\le i \le j$
    \begin{equation*}
      h(j,i) = (-1)^{j+1} \binom{j}{i} \mod p.
    \end{equation*}
  \end{enumerate}
  
  Before we prove the claims, we show how they imply~\eqref{eq:periodicity:zZ}.
  Recalling the well-known fact that
  \begin{equation*}
    \binom{p^t}{i} = 0 \mod p
  \end{equation*}
  for every integer~$t\ge 1$ and for all $i=1,\dots,p^t-1$ (cf.\ e.g.~\cite{LidlNiederreiter94}), the equations~\eqref{eq:periodicity:zZ:middle} follow by applying Claims \ref{claim:periodicity:rec} and~\ref{claim:periodicity:binom} with $j:=r-2$: For $i=1,\dots,r-2 = p^t-1$, since
  \begin{multline*}
    h(r-1,i) = -h(r-2,i) - h(r-2,i-1) =\\ = - (-1)^{r-1}\binom{r-2}{i} - (-1)^{r-1}\binom{r-2}{i-1} \mod p,
  \end{multline*}
  it follows that
  \begin{alignat*}{2}
    h(r-1,i)
    &= - \binom{r-1}{i} &\;&\mod p \\
    &= - \binom{p^t}{i} &&\mod p \\
    &= 0 &&\mod p.
  \end{alignat*}

  To prove~\eqref{eq:periodicity:zZ:rminus1}, we infer from the claims that
  \begin{multline*}
    h(r-1,r-1) = -h(r-2,r-1) -h(r-2,r-2) =
    \\
    - f( (r-1)r - r+1 ) - (-1)^{r-1} \binom{r-2}{r-2} =
    \\
    -f( (r-2)r + 1 ) -(-1)^{p} = 1,
  \end{multline*}
  where the last equation follows from Lemma~\ref{lem:zero-blocks} and the fact that $-(-1)^p=1$ even for $p=2$.
  Finally, for~\eqref{eq:periodicity:zZ:0}, we conclude that
  \begin{multline*}
    h(r-1,0) = -h(r-2,0) - h(r-2,-1) =
    \\
    -(-1)^{r-1} \binom{r-2}{0} - f( r^2 -(r-1) ) =
    -(-1)^p - h(r-1,r-1) =
    \\
    -(-1)^p -1 = 0,
  \end{multline*}
  where the last-but-one equation follows from \eqref{eq:periodicity:zZ:rminus1}.

  \smallskip%
  \subparagraph{\it Proof of Claim~(\ref{claim:periodicity:rec}).} 
  This is a straightforward computation.  For all $j,i$, we compute
  \begin{multline*}
    h(j+1,i) = f((j+2)r-i) =
    \\
    f((j+1)r-i+r) = -f((j+1)r-i) - f((j+1)r-(i-1)) =
    \\
    -h(j,i) -h(j,i-1).
  \end{multline*}
  \qed

  \smallskip%
  \subparagraph{\it Proof of Claim~(\ref{claim:periodicity:bd}).} 
  This claim follows from Lemma~\ref{lem:zero-blocks}.  We have
  \begin{align*}
    h(j,-1) &= f((j+1)r+1) = 0                        &&\text{for $j=0,\dots,r-3$,}\\
    \intertext{and} h(j,j+1) &= f((j+1)r-j-1) = f(jr+r-1-j) = 0 && \text{for $j=0,\dots,r-2$.}
  \end{align*}
  \qed
  
  \smallskip%
  \subparagraph{\it Proof of Claim~(\ref{claim:periodicity:binom}).} 
  Since $h(0,0)=-1$, Claim~(\ref{claim:periodicity:binom}), follows from Claims (\ref{claim:periodicity:rec}) and~(\ref{claim:periodicity:bd}).  \qed

  \medskip%
  \subparagraph{This completes the proof of Lemma~\ref{lem:periodicity}.}
\end{proof}

\begin{remark}
  As seen in the proof, not surprisingly, our recurrence relation~\eqref{eq:def-f:recrel} produces binomial coefficients.

  However, it would be interesting to know whether there are other linear recurrence relations, $f(k+r) = \sum_{j=0}^{r-1} \alpha_j f(k+j)$, which define circulant fooling-set matrices with the appropriate relation between size and rank.  Since all such sequences are periodic, only the conclusion of Lemma~\ref{lem:key-lemma} must be satisfied, and the period must be asymptotic to~$r^2$.
\end{remark}

Lemmas \ref{lem:zero-blocks} and~\ref{lem:periodicity} allow us to prove Lemma~\ref{lem:key-lemma}.

\begin{proof}[Proof of Lemma~\ref{lem:key-lemma}.]
  We need to show $f(k) f(-k) = 0$ whenever $n \nmid k$.  By Lemma~\ref{lem:periodicity}, this is equivalent to showing $f(k) f(n-k) = 0$ for $k=1,\dots,n-1$.  Given such a~$k$, let $j,i$ be such that $k = jr +i$ and $0\le i \le r-1$.
  
  If $i \le r-1-j$, then $f(k)=0$ by Lemma~\ref{lem:zero-blocks}, and we are done.  If, on the other hand, $i > r-1-j$, then
  \begin{equation*}
    n-k
    =
    r^2 - r +1 - jr - i
    =
    (r-1-(j+1))r  + (r-i+1),
  \end{equation*}
  and $r-i+1 \le j+1$, so, by Lemma~\ref{lem:zero-blocks}, we have $f(n-k) = 0$.
\end{proof}

\section{Characteristic Zero: Fooling-Set Matrices from Binomial Coefficients}\label{sec:chzero}
We now prove the result in characteristic zero.

\begin{theorem}\label{thm:main:0}
  For each~$r \ge 1$, there is a fooling-set matrix $\Mr$ over~$\QQ$ of size $\binom{r+1}{2}$ and rank~$r$.
\end{theorem}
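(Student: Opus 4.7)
The plan is to exhibit an explicit matrix $M=M^{(r)}\in\QQ^{n\times n}$ of size $n=\binom{r+1}{2}$, verify the fooling-set property by direct case analysis on binomial-coefficient vanishing, and show rank exactly $r$ via an explicit rank factorization combined with an $r\times r$ nonsingular submatrix. I would index both rows and columns by pairs $(i,s)$ with $1\le i\le r$ and $0\le s\le i-1$, so that the index set splits into $r$ groups of sizes $1,2,\dots,r$. This gives $M$ a natural $r\times r$ block structure in which the $(i,j)$-block has size $i\times j$, and I would define the entries of $M$ so that each such block is Toeplitz in the offset $s-t$, matching the structural hint given in the introduction.

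The formula for $M_{(i,s),(j,t)}$ will be a binomial coefficient (or a small sum/product of binomials) whose upper and lower arguments are linear in $i,j,s,t$, chosen so that on the diagonal it reduces to $\binom{N}{N}=1$ for some explicit $N$. Verifying the fooling-set property then amounts to checking, for every off-diagonal pair, that at least one of $M_{(i,s),(j,t)}$ and $M_{(j,t),(i,s)}$ vanishes. Swapping $(i,s)\leftrightarrow(j,t)$ negates $s-t$ and transposes the block indices $i\leftrightarrow j$, and appealing to the convention $\binom{n}{k}=0$ for $k<0$ or $0\le n<k$ I would split into the intra-block case $i=j$ (where the block's Toeplitz function must itself satisfy $g_{i,i}(u)\,g_{i,i}(-u)=0$ for $u\ne 0$) and the inter-block case $i\ne j$ (where the asymmetry between the two binomial arguments forces a vanishing).

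For the rank bound, I would display a factorization $M=UV^{\top}$ with $U,V\in\QQ^{n\times r}$ whose rows consist of binomial-coefficient expressions in $(i,s)$, and prove the identity $M_{(i,s),(j,t)}=\sum_{k=0}^{r-1} U_{(i,s),k}\,V_{(j,t),k}$ by means of the Vandermonde convolution $\binom{x+y}{m}=\sum_k\binom{x}{k}\binom{y}{m-k}$, or by iterating the Pascal addition identity $\binom{n}{k}=\binom{n-1}{k}+\binom{n-1}{k-1}$. This yields $\rk M\le r$. To see $\rk M\ge r$, I would exhibit an explicit $r\times r$ submatrix---for example the one cut out by the rows $(r,0),(r,1),\dots,(r,r-1)$ of the last block and a suitable collection of $r$ columns---and compute its determinant, which factors as a product of small binomials and is therefore nonzero. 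Combined with the inequality $r\le \rk M$ this pins the rank down to exactly $r$ as required.

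The hard part is arranging the formula so that fooling-set and rank $r$ hold simultaneously. Single-binomial guesses such as $\binom{j-t}{i-s}$ or block-triangular products like $\binom{j}{i}\binom{t}{s}$ easily achieve one of the two properties in isolation, but the other typically fails: forcing enough off-diagonal vanishing to get fooling-set tends to destroy the linear dependencies among the rows, while imposing rank $r$ creates coincidental equalities between $M_{(i,s),(j,t)}$ and $M_{(j,t),(i,s)}$ at pairs of equal ``weight''. The most delicate verification is the intra-block fooling condition for the largest block $i=j=r$, which forces $g_{r,r}$ to vanish strictly on one side of the origin and in turn is what makes the Vandermonde/Pascal identity underlying the rank factorization just barely work.
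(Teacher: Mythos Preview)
Your proposal is not a proof but a proof \emph{plan}: it correctly identifies the architecture the paper uses (an $r\times r$ block structure with Toeplitz blocks whose entries are binomial coefficients, fooling-set verified by binomial vanishing, rank bounded via Pascal's identity), yet it never writes down the formula for $M_{(i,s),(j,t)}$. You yourself flag this as ``the hard part'' and explain why naive candidates fail, but you do not resolve it. The entire content of the theorem lies in the specific choice of that formula; without it there is nothing to verify, and the sketch is vacuous.

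For comparison, the paper's construction is as follows. It indexes the blocks so that their sizes are $r,r-1,\dots,1$ (the reverse of your convention, which is immaterial), and the $(a,b)$-block is the Toeplitz matrix $F_{a-b}$ given by
\[
f_t(i,j)=\begin{cases}
\dbinom{t-1}{j-i-1}, & t>0,\\[1ex]
(-1)^{j-i}\dbinom{-t-1+j-i}{-t-1}, & t\le 0,\ i<j,\\[1ex]
(-1)^{i-j-t}\dbinom{i-j-1}{-t}, & t\le 0,\ i\ge j.
\end{cases}
\]
The fooling-set property reduces to $f_t(i,j)f_{-t}(j,i)=0$, which for $t>0$ and $j>i$ becomes $\binom{t-1}{j-i-1}\binom{j-i-1}{t}=0$; one factor vanishes for $j-i\le t$ and the other for $j-i\ge t+1$. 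The rank bound comes not from an explicit $UV^{\top}$ factorization but from the single recurrence $f_t(i,j)=f_{t-1}(i,j)+f_{t-1}(i+1,j)$, proved case by case from the addition formula; this shows every row below the first block is a sum of two rows above it, hence all rows lie in the span of the first $r$. The lower bound $\rk M\ge r$ is immediate since the top-left block $f_0^{r,r}$ is lower triangular with ones on the diagonal. Your Vandermonde-convolution idea for the rank would also work once the formula is in hand, but the paper's recurrence is cleaner.
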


The entries of~$\Mr$ are binomial coefficients, up to sign.  As in the previous section, the low rank property will follow from the binomial addition identity.  Whereas the matrix in the previous section is circulant, this matrix has a more complicated block structure but each block is Toeplitz.

\paragraph{\bf We now describe the construction of the matrices $\Mr$.}
To get some feeling for these matrices, here are the first few examples
\begin{equation*}
  \Mr[1]=\begin{pmatrix}
    1
  \end{pmatrix}\!,\ \Mr[2] = \begin{pmatrix}
    1 & 0 & 1\\
    -1 & 1 & 0 \\
    0 & 1 & 1\\
  \end{pmatrix}\!,\ \Mr[3] = \begin{pmatrix}
    1 & 0 & 0 & 1 & -1 & 1\\
    -1 & 1 & 0 & 0 & 1 & 0\\
    1 & -1 & 1 & -1 & 0 & 0\\
    0 & 1 & 0 & 1 & 0 & 1\\
    0 & 0 & 1 & -1 & 1 & 0\\
    0 & 1 & 1 & 0 & 1 & 1
  \end{pmatrix}.
\end{equation*}

The recursive structure of $\Mr$ can be seen from these examples\footnote{%
  If the reader wants to see larger examples, Matlab code to construct $\Mr$ can be found at \url{https://github.com/troyjlee/hadamard_factorization}.%
}.  %
In general, the top left $r\times r$ principal submatrix of $\Mr$ will be lower triangular with ones of alternating sign, and the bottom right $\binom{r}{2}$-sized principal submatrix will be $\Mr[r-1]$.

We now give the details of the construction.  First we define, for each integer~$t$, a function $f_t\colon \NN\times\NN \to \ZZ$ (with $\NN:= \{1,2,3,\dots\}$).  These functions will be used in the construction.  They can be thought of as infinite matrices, and we will use the notation $F_t^{r,s}$ to specify the $r\times s$ matrix
\begin{equation*}
  \bigl( F_t^{r,s} \bigr)_{i,j} := f_t(i,j), \quad\text{ for $i=1,\dots,r$ and $j=1,\dots,s$.}
\end{equation*}
Let $t \in \ZZ$ and $i,j \in \NN$.  The function $f_t$ is defined as
\begin{equation*}
  f_t(i,j) :=
  \begin{cases}
    \displaystyle \binom{t-1}{j-i-1}, & \text{ if $t>0$,}\\[2ex]
    \displaystyle (-1)^{j-i}\binom{-t-1+j-i}{-t-1}, & \text{ if $t \le 0$ and $i < j$,}\\[2ex]
    \displaystyle (-1)^{i-j-t} \binom{i-j-1}{-t}, & \text{ if $t \le 0$ and $i \ge j$.}
  \end{cases}
\end{equation*}
Note that in each case, $f_t(i,j)$ depends on the difference $i-j$ only, thus each $f_t$ is Toeplitz.  When $t>0$, we see that $f_t(i,j)=0$ whenever $i \ge j$ meaning that these~$f_t$ are upper triangular.  When $t=0$, the definition simplifies to $f_0(i,j)=\binom{-1}{i-j}$, thus $f_0$ is lower triangular with ones on the main diagonal.

To get a better idea where the $f_t$ come from, consider an extended Pascal's triangle where the upper and lower indices begin from $-1$.  In the following table, the entries are binomial coefficients where upper indices label the rows, lower indices label the columns.
\begin{center}
  \begin{tabular}{c|cccccc}
    &-1 &0&1&2&3&4 \\
    \hline
    -1 & 0 &1 & -1 & 1 & -1&1\\
    0 & 0 &1 & 0 & 0 & 0&0\\
    1 & 0 &1& 1 &0 & 0&0 \\
    2& 0 & 1 & 2 & 1& 0&0\\
    3& 0 & 1 &3 & 3&1&0 \\
    4& 0 & 1 & 4 & 6 & 4 & 1
  \end{tabular}
\end{center}
The matrix $f_t$ for $t >0$ is the infinite Toeplitz matrix whose first row is given by the row of Pascal's triangle indexed by $t-1$, and whose first column is all zero.  For $t < 0$, up to signs, $f_t$ is the infinite Toeplitz matrix whose first column is given by the column of Pascal's triangle indexed by $-t$ and whose first row is given by the $-t-1$ column of Pascal's triangle, starting from the row indexed by $-t-1$.
 
Using the $f_t$ we can now construct the fooling-set matrices~$\Mr$.  For $r \ge 1$, let $\Mr$ be a matrix of size $\binom{r+1}{2}$ defined as
\begin{equation*}
  \Mr =
  \begin{pmatrix}
    f_0^{r,r} & f_{-1}^{r,r-1} & f_{-2}^{r,r-2} & \cdots & f_{-r+1}^{r,1} \\[.75ex]
    f_1^{r-1,r} & f_0^{r-1,r-1} & f_{-1}^{r-1,r-2} &  & f_{-r+2}^{r-1,1} \\[.75ex]
    f_2^{r-2,r} & f_1^{r-2,r-1} & f_0^{r-2,r-2} &  & f_{-r+3}^{r-2,1} \\[1ex]
    \vdots &  &  & \ddots & \vdots \\[1ex]
    f_{r-1}^{1,r} & f_{r-2}^{1,r-1} &\cdots  & \cdots & f_0^{1,1} \\
  \end{pmatrix}
\end{equation*}
The size of $\Mr$ is clearly $\binom{r+1}{2}$.  That $\Mr$ is a fooling-set matrix and has rank~$r$ will be shown in the next lemmas.

\paragraph{\bf We first show that $\Mr$ is a fooling-set matrix.}
This follows from the fact that~$f_0$ is lower triangular and that in the above extended Pascal's triangle for $t > 0$ the row indexed by $t-1$ and column indexed by~$t$ are disjoint.

\begin{lemma}\label{lem:0:fool}
  $\Mr$ is a fooling-set matrix.
\end{lemma}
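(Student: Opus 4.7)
The plan is to index rows and columns of $\Mr$ by pairs $(b,i)$ with $b \in \{0,\dots,r-1\}$ and $i \in \{1,\dots,r-b\}$, so that the $((b,i),(b',j))$-entry of $\Mr$ equals $f_{b-b'}(i,j)$.  Under this indexing the diagonal corresponds to $b=b'$, $i=j$, and the fooling-set property reduces to two statements: (a) $f_0(i,i)\ne 0$ for every admissible~$i$, and (b) $f_{b-b'}(i,j)\cdot f_{b'-b}(j,i) = 0$ whenever $(b,i)\ne (b',j)$.

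Statement~(a) is immediate from the third case of the definition of $f_t$: plugging in $t=0$ and $i=j$ gives $f_0(i,i) = (-1)^{0}\binom{-1}{0} = 1$.

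For (b) I would split into two cases.  If $b=b'$, both factors come from the same diagonal block $f_0$, which is lower triangular with unit diagonal (since for $i<j$ the second subcase reduces to $\binom{j-i-1}{-1}=0$); hence for $i\ne j$ exactly one of $f_0(i,j)$ and $f_0(j,i)$ vanishes.  If $b\ne b'$, set $t := b-b'$ and assume $t>0$ by symmetry.  A direct reading of the three-case definition then shows that $f_t(i,j) = \binom{t-1}{j-i-1}$ is supported on pairs with $1 \le j-i \le t$, whereas $f_{-t}(j,i)$ is supported on pairs with either $j<i$ (from its first subcase, always nonzero there) or $j \ge i+t+1$ (from its second subcase).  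These two support regions are disjoint, so the product in~(b) vanishes identically.

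The only real obstacle is notational bookkeeping: one must carefully unfold the three-branch definition of $f_t$ and track when each binomial coefficient is zero under the convention $\binom{n}{k}=0$ for $k<0$.  Once the supports are identified, (b) reduces to the trivial observation that the ``band'' $\{(i,j): 1 \le j-i \le t\}$ is disjoint from $\{(i,j): j-i \le -1\}\cup\{(i,j): j-i \ge t+1\}$ in the integer lattice, so no deeper binomial identity is needed.
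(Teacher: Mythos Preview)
Your proof is correct and follows essentially the same route as the paper's: reduce to showing $f_t(i,j)\,f_{-t}(j,i)=0$ for each $t$, handle $t=0$ by lower-triangularity of $f_0$, and for $t>0$ argue that the supports of $f_t(i,j)$ and $f_{-t}(j,i)$ (as functions of $j-i$) are disjoint.  The paper's write-up is marginally more economical in the $t>0$ case---it first disposes of $i\ge j$ via $f_t(i,j)=\binom{t-1}{j-i-1}=0$, and then for $j>i$ writes the product as $\binom{t-1}{j-i-1}\binom{j-i-1}{t}$ and observes one factor vanishes---whereas you take the extra (unnecessary but harmless) step of verifying that $f_{-t}(j,i)$ is actually nonzero when $j<i$.
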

\begin{proof}
  The diagonal entries of $\Mr$ are~$1$ as desired.  To show that $\Mr(i,j)\Mr(j,i) = 0$ for $i \ne j$, it suffices to show that $f_t(i,j) f_{-t}(j,i)=0$ for each~$t$.  This clearly holds for $t=0$ as $f_0$ is lower triangular.  Now suppose $t >0$.  If $i \ge j$ then $f_t(i,j)=0$ thus in this case we are also fine.  In the case $j > i$ we have
  \begin{equation*}
    \abs{ f_t(i,j)} \abs{ f_{-t}(j,i) } = \binom{t-1}{j-i-1} \binom{j-i-1}{t}=0.
  \end{equation*}
  The second term is zero for $j-i \le t$ while the first term is zero for $j-i \ge t+1$, thus the product is always zero.
\end{proof}
In fact, $\Mr$ has the stronger property that exactly one of $\Mr(i,j), \Mr(j,i)$ is zero for $i \ne j$.

\paragraph{\bf We now come to the rank of~$\Mr$.}
The following claim is the key to prove $\rk(\Mr) \le r$.

\begin{lemma}\label{lem:recurrence}
  For any $t \in \ZZ$ and $i,j \in \NN$
  \begin{equation*}
    f_t(i,j)=f_{t-1}(i,j) + f_{t-1}(i+1,j).
  \end{equation*}
\end{lemma}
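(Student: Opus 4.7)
The plan is to do a straightforward case analysis, reducing each instance of the identity to Pascal's addition formula from Section~3 (occasionally combined with the evaluation $\binom{-1}{k}=(-1)^k$ for $k\ge 0$). The cases are determined by the three branches of the piecewise definition of~$f_t$, which split on the sign of~$t$ and, when $t\le 0$, on whether $i<j$ or $i\ge j$.

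First I would dispatch the ``generic'' regions. For $t\ge 2$, all three quantities $f_t(i,j)$, $f_{t-1}(i,j)$, $f_{t-1}(i+1,j)$ are given by the top clause, and the identity reads
\begin{equation*}
\binom{t-1}{j-i-1}=\binom{t-2}{j-i-1}+\binom{t-2}{j-i-2},
\end{equation*}
which is Pascal. For $t\le 0$ with $j\ge i+2$ (so that $i<j$ and $i+1<j$), setting $u:=-t$ and $d:=j-i$, after factoring the common sign $(-1)^d$ the identity reduces to
\begin{equation*}
\binom{u+d-1}{u-1}=\binom{u+d}{u}-\binom{u+d-1}{u},
\end{equation*}
which is Pascal again. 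The symmetric range $t\le 0$ with $j\le i-1$ is analogous, with common sign $(-1)^{i-j+u}$ and reduction $\binom{d-1}{u}=\binom{d}{u+1}-\binom{d-1}{u+1}$, where now $d=i-j$.

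What remain are the boundary value $t=1$ and, for $t\le 0$, the sub-cases $j=i$ and $j=i+1$. When $t=1$, $f_1(i,j)$ is $1$ if $j=i+1$ and $0$ otherwise, while $f_0$ is lower-triangular with entries $(-1)^{i-j}$ on and below the diagonal; a three-way split on $j\le i$, $j=i+1$, $j\ge i+2$ verifies the identity by direct inspection. When $t\le 0$ and $j\in\{i,i+1\}$, one uses the diagonal evaluation $f_t(i,i)=(-1)^{-t}\binom{-1}{-t}=1$, the vanishing $f_{t-1}(i+1,i)=0$ (since $\binom{0}{-t+1}=0$), and the ``just above the diagonal'' value $f_t(i,i+1)=-\binom{-t}{-t-1}$, after which each case collapses to a one-line arithmetic check. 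The only real nuisance will be bookkeeping the signs and index shifts across these boundaries; once that is done, every sub-case is a single application of Pascal's rule, so no deeper obstacle arises.
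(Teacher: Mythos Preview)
Your proposal is correct and follows essentially the same route as the paper: a case split on the branches of the piecewise definition, with each sub-case reduced to the addition formula. The only cosmetic difference is in boundary handling for $t\le 0$: the paper observes that, for $t<0$, the two clauses defining $f_t$ agree at $i=j$, which lets it treat the whole range $i<j$ (including $i+1=j$) uniformly, whereas you carve out $j\in\{i,i+1\}$ and verify those by hand; similarly, for $t=1$ the paper uses the unified expression $f_0(i,j)=\binom{-1}{i-j}$ plus the symmetry identity instead of your three-way inspection.
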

\begin{proof}
  We break the proof into three cases depending on the value of $t$.

  \subparagraph{Case 1: $t >1$} This case follows from the binomial addition formula
  \begin{align*}
    f_t(i,j)=\binom{t-1}{j-i-1}&=\binom{t-2}{j-i-1} + \binom{t-2}{j-i-2} \\
    &=f_{t-1}(i,j) + f_{t-1}(i+1,j) \enspace.
  \end{align*}

  \subparagraph{Case 2: $t=1$} In this case we use the symmetry identity together with binomial addition formula.
  \begin{align*}
    f_1(i,j)=\binom{0}{j-i-1} = \binom{0}{i-j+1}&=\binom{-1}{i-j} + \binom{-1}{i-j+1} \\
    & = f_0(i,j)+f_0(i+1,j) \enspace.
  \end{align*}

  \subparagraph{Case 3: $t \le 0$} First consider the case $i \ge j$.  Then again by the binomial addition formula
  \begin{align*}
    f_{t}(i,j)&=
    (-1)^{i-j-t}\binom{i-j-1}{-t} \\
    &=(-1)^{i-j-t} \Biggl(-\binom{i-j-1}{-t+1}+\binom{i-j}{-t+1} \Biggr) \\
    &=(-1)^{i-j-t+1}\binom{i-j-1}{-t+1}+(-1)^{i-j-t+2}\binom{i-j}{-t+1} \\
    &=f_{t-1}(i,j) + f_{t-1}(i+1,j) \enspace .
  \end{align*}

  Finally, consider the case $i<j$.  This case requires some care as it could be that $i+1=j$.  For $t<0$, however, notice that the two formulas defining $f_t$ agree when $i=j$.  The first gives $(-1)^{j-i}$ and the second $(-1)^{i-j-t} (-1)^{-t}=(-1)^{j-i}$.  Thus when $t<0$ and $i=j$ the two formulas in the definition are consistent.  As we are in Case 3, we are safe expressing $f_{t-1}(i+1,j)$ using the formula for $i < j$ as $t \le 0$.
  \begin{align*}
    f_t(i,j)&=(-1)^{j-i}\binom{-t-1+j-i}{-t-1} \\
    &=(-1)^{j-i} \Biggl( \binom{-t+j-i}{-t} - \binom{-t+j-i-1}{-t}  \Biggr) \\
    &=(-1)^{j-i} \binom{-t+j-i}{-t} + (-1)^{j-i-1}\binom{-t+j-i-1}{-t} \\
    &=f_{t-1}(i,j) + f_{t-1}(i+1,j) \enspace. \\
  \end{align*}
\end{proof}

\begin{lemma}\label{lem:0:rk}
  The rank of $\Mr$ is $r$.
\end{lemma}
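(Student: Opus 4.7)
The plan is to establish $\rk(\Mr) = r$ by proving the two inequalities separately, with Lemma~\ref{lem:recurrence} doing essentially all the work for the upper bound.

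For the lower bound $\rk(\Mr) \ge r$, I would observe that the top-left $r\times r$ block is $F_0^{r,r}$, whose entries $f_0(i,j)$ vanish for $i<j$ (since $\binom{-1+j-i}{-1}=0$) and equal $(-1)^{i-j}$ for $i \ge j$. Thus this block is lower triangular with $\pm 1$ on the diagonal, so it is nonsingular and already contributes rank~$r$.

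For the upper bound $\rk(\Mr) \le r$, I would show that every row of $\Mr$ lies in the span of the $r$ rows comprising the first row block. Index the row blocks by $a=1,\ldots,r$ and the column blocks by $b=1,\ldots,r$, so that the $(a,b)$-block equals $F_{a-b}^{\,r-a+1,\,r-b+1}$. Let $R_a(i)$ denote the $i$-th row of row block~$a$, for $i=1,\ldots,r-a+1$. The central claim is
\begin{equation*}
  R_{a+1}(i) \;=\; R_a(i) + R_a(i+1) \qquad\text{for } 1\le a\le r-1,\ 1\le i\le r-a.
\end{equation*}
To verify this, note that in column block~$b$, row $R_{a+1}(i)$ consists of the entries $f_{(a+1)-b}(i,j)$ for $j=1,\ldots,r-b+1$, while $R_a(i)$ and $R_a(i+1)$ consist of $f_{a-b}(i,j)$ and $f_{a-b}(i+1,j)$ respectively on the same columns. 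Applying Lemma~\ref{lem:recurrence} with $t=(a+1)-b$ gives the required equality block-by-block, hence on all of $\Mr$. A straightforward induction on~$a$ then shows that every $R_a(i)$ is a $\ZZ$-linear combination (with binomial-coefficient weights) of $R_1(1),\ldots,R_1(r)$, proving that the row space of $\Mr$ has dimension at most~$r$.

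The two bounds combine to give $\rk(\Mr)=r$. The main obstacle is bookkeeping rather than mathematics: one must align the row and column indices of each block $F_{a-b}^{r-a+1,r-b+1}$ correctly so that Lemma~\ref{lem:recurrence} (which concerns entries $f_t(i,j)$ indexed by $\NN\times\NN$) applies uniformly across all column blocks, and check that the admissible range $1\le i\le r-a$ matches the shrinking number of rows in successive row blocks. Once the inter-block recurrence $R_{a+1}(i)=R_a(i)+R_a(i+1)$ is stated, both induction and the rank bound follow immediately.
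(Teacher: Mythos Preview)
Your proposal is correct and follows essentially the same approach as the paper: the lower bound comes from the nonsingular lower-triangular block $F_0^{r,r}$, and the upper bound comes from Lemma~\ref{lem:recurrence}, which expresses each row in a later block as a sum of two rows of the preceding block. You have simply written out in detail the block-by-block application of the recurrence that the paper compresses into a single sentence.
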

\begin{proof}
  The rank of $\Mr$ is at least~$r$, because the submatrix $f_0^{r,r}$ has rank $r$.

  Lemma~\ref{lem:recurrence} shows that all rows of $\Mr$ can be expressed as linear combinations of the first~$r$ rows, thus also $\rk(\Mr) \le r$.
\end{proof}

\paragraph{\bf Putting it all together,} Theorem~\ref{thm:main:0} is obtained from Lemmas \ref{lem:0:fool} and~\ref{lem:0:rk}.

\section{Conclusion}\label{sec:conclusio}
We conclude by discussing some questions which remain open.

First of all, in characteristic zero, it would be interesting to know whether inequality~\eqref{eq:rk-of-fool} is asymptotically tight, or, more generally:
\begin{question}
  What is smallest constant~$C$ such that $n \le C\, (\rk_\kk M)^2$ for all $n\times n$ fooling-set matrices~$M$ over a field~$\kk$ of characteristic zero?
\end{question}
There is a possibility that, in characteristic zero, the minimum achievable rank on the right hand side of inequality~\eqref{eq:rk-of-fool} may depend not only on the characteristic, but on the field~$\kk$ itself.  Indeed, there are examples of zero-nonzero patterns for which the minimum rank of a matrix with that zero-nonzero pattern differs between $\kk = \QQ$ and $\kk = \RR$, see e.g.~\cite{KoppartyBhaskararao08}.

Secondly, while the construction in Section~\ref{sec:construction} for nonzero characteristic gives circulant matrices, the matrices in Section~\ref{sec:chzero} are not circulant.
\begin{question}
  Can the exponent on the rank in the inequality~\eqref{eq:rk-of-fool} be improved for {\em circulant} fooling-set matrices over $\kk$ with characteristic zero?
\end{question}


\end{document}